\newtheorem{theorem}{Theorem}[section]
\newtheorem{remark}[theorem]{Remark}
\newtheorem{lemma}[theorem]{Lemma}
\newtheorem{question}[theorem]{Question}
\title{Primitive and Geometric-Progression-Free Sets without large gaps}
\author{Nathan McNew}
\address{Department of Mathematics, Towson University, 8000 York Road, Towson, MD 21252}
\email{nmcnew@towson.edu}
\subjclass[2010]{11N25 (primary), and 11B05 (secondary)}
   \def\MR#1{}
\begin{document}
\begin{abstract}
     We prove the existence of primitive sets (sets of integers in which no element divides another) in which the gap between any two consecutive terms is substantially smaller than the best known upper bound for the gaps in the sequence of prime numbers.  The proof uses the probabilistic method.  Using the same techniques we improve the bounds obtained by He for gaps in geometric-progression-free sets.  
\end{abstract}
\maketitle

\section{Introduction}

Despite the rich history of research on the gaps in the sequence of prime numbers, including many recent breakthroughs, the magnitudes of the largest gaps in this sequence are still poorly understood.  Denoting by $p_1, p_2, \ldots$ the sequence of prime numbers, it has been known since 2001, due to Baker, Harman, and Pintz \cite{bhp}, that \[p_n -p_{n-1} \ll p_n^{0.525}.\]
Assuming the Riemann Hypothesis gives a small improvement. Cram\'er \cite{cra} shows 
 \[p_n -p_{n-1} \ll \sqrt{p_n}\log p_n.\]
Cram\'er \cite{Cra2} conjectures, however, that the bound $p_n -p_{n-1} \ll \log^2 p_n$ gives the true order of magnitude of the largest gaps. As for lower bounds, it follows immediately from the prime number theorem that there must exist gaps where $p_n -p_{n-1} \geq \log p_n$.  This can be improved upon slightly.  It has recently been shown by Ford, Green, Konyagin, Maynard and Tao \cite{fgkmt} that, for some positive constant $c$, the innequality \[p_n-p_{n-1} > \frac{c\log p_n \log \log  p_n \log_4p_n}{\log_3 p_n} \] holds infinitely often, improving on the previous result of Rankin \cite{rankinprimes} which included an additional triple $\log $ factor in the denominator.  Here, and throughout the paper, $\log_i x$ will be used to denote the $i$-fold iterated logarithm when $i\geq 3$. Since $\log \log x$ is commonly used it will be used for readability when $i=2$. 

Generalizing from the set of primes, one can consider any primitive set of integers.  We say a set is primitive if no integer in the set divides another integer in the set.  The study of primitive sets also has a rich history. For example, it is known that primitive sets can have counting function substantially larger than the prime numbers.  Ahlswede, Khachatrian, and S\'ark\"{o}zy \cite{AKS} showed there exists a primitive sequence $s_1<s_2<\cdots$ with \[n \asymp \frac{s_n}{\log \log  s_n (\log_3 s_n)^{1+\epsilon}}\] for sufficiently large $n$.   Martin and Pomerance \cite{mp} show that this can be improved slightly, in fact there exists such a sequence where 
\[n \asymp \frac{s_n}{\log \log  s_n \log_3 s_n \cdots \log_k s_n (\log_{k+1} s_n)^{1+\epsilon}}\] 
for sufficiently large $n$ and any $k\geq 2$.  This is, in a sense, best possible, as Erd\H{o}s \cite{erdosprimitive} shows that any primitive sequence $s_1, s_2, \ldots$ must satisfy \[\sum_{n=1}^\infty \frac{1}{s_n \log s_n} < \infty.\]
Compared to the sequence of prime numbers, where the average gap grows like $\log x$, we see from these results that primitive sets can have substantially smaller gaps on average, on the order of $\log \log  x \log_3 x \cdots \log_k x (\log_{k+1} x)^{1+\epsilon}$ for any $k\geq 2$.  Nevertheless, it has not yet been possible to show that the largest gaps among these sequences is any smaller than what is known for the prime numbers.    

We show here that there exist primitive sequences in which the gap between consecutive terms is substantially smaller than has been previously shown for the primes or any other primitive sequence.  In particular, we get the following upper bound.

\begin{theorem} \label{thm:primitive}
For any $\epsilon>0$ there exists a primitive sequence $q_1< q_2 < \cdots$ of integers in which the gap between any two consecutive terms is bounded above by \begin{equation} q_n-q_{n-1} \leq \exp \left(\sqrt{2\log q_n \log \log  q_n + (2+\epsilon)\log q_n \log_3 q_n}\right). \label{primitive bound} \end{equation}
\end{theorem}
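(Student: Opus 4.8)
The plan is to construct $Q$ by the probabilistic method, as a disjoint union over a sequence of scales, set up so that primitivity is automatic and only the gap condition is proved probabilistically. Fix scales $N_1<N_2<\cdots$ with, say, $N_{j+1}=2N_j$, so that the target gap $H=H(N)$ is essentially constant on each block $[N_j,N_{j+1})$. For the block at $N$ choose an integer $k=k(N)$ and a short interval of primes $\mathcal P(N)$ with $z:=\min\mathcal P(N)\asymp N^{1/k}$ and multiplicative width $\asymp 1/k$ (so that the products $p_1\cdots p_k$ with $p_i\in\mathcal P(N)$ range over $[N,2N)$), the $\mathcal P(N)$ chosen pairwise disjoint across blocks; the \emph{candidates} at this block are the squarefree integers $p_1\cdots p_k$ with distinct $p_i\in\mathcal P(N)$. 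Each candidate is a $k$-almost prime, so candidates inside one block form an antichain under divisibility, while candidates from different blocks share no prime factor at all; hence \emph{any} subset of the union of all the candidate sets is primitive, and it suffices to choose, in each block, a sub-collection of candidates with no gap larger than $H(N)$.

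For this I would retain each candidate independently with probability $\pi=\pi(N)$. The retained set $Q$ is still primitive, and for an interval $J$ of length $H$ inside a block, $\Pr[\,Q\cap J=\emptyset\,]\le\exp(-\pi C_J)$, where $C_J$ is the number of candidates in $J$; summing over the $\asymp N/H$ such intervals in the block and over all blocks, Borel--Cantelli produces a set with no gap exceeding $H$ past some point provided $\pi C_J\gg\log(N/H)+\log j\asymp\log N$ holds for every $J$. Up to the factor $k!$, $C_J$ counts the representations $p_1\cdots p_k\in J$ with $p_i\in\mathcal P(N)$, i.e.\ a $k$-fold multiplicative convolution of the indicator of the primes in $\mathcal P(N)$; using the prime number theorem with its classical error term to count the primes in $\mathcal P(N)$, one aims for the lower bound $C_J\gg \dfrac{H}{N}\cdot\dfrac{1}{\sqrt k}\cdot\dfrac{\bigl(\#\{p\in\mathcal P(N)\}\bigr)^{k}}{k!}$, uniform in $J$. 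Feeding in $\#\{p\in\mathcal P(N)\}\asymp z/(k\log z)$ and $z^k\asymp N$, applying Stirling, and optimizing the free parameter $k$ — whose optimal size is $k\asymp\sqrt{\log N/\log\log N}$ — turns the requirement $\pi C_J\gg\log N$ (with $\pi\le 1$) into exactly the bound on $\log H$ in \eqref{primitive bound}, the $(2+\epsilon)\log N\log_3 N$ term coming from the lower-order contributions of Stirling's formula and of $\log(N/H)+\log j$.

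Finally one passes from blocks to consecutive terms (the blocks being dense enough that $H(N_j)$ and $H(q_n)$ agree to within the permitted error), prepends a finite primitive set to cover small $q_n$, and rechecks primitivity of the union. I expect the main obstacle — and the place where the real idea must enter — to be the uniform lower bound $C_J\gg\log N$ for \emph{every} short interval $J$, obtained without recourse to any unproved statement about primes in short intervals: the retention randomness cannot repair an interval containing no candidates, so one genuinely needs $C_J>0$ always, and this is where the growth of $k$, the $k$-fold convolution, and presumably some further randomization in the choice of the supports $\mathcal P(N_j)$, must do their work. Balancing this against the fact that keeping the $\mathcal P(N_j)$ thin (width $\asymp 1/k$) and pairwise disjoint across a dense family of blocks depresses $C_J$ is what pins down the exponent.
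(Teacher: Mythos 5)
Your construction puts all of the randomness in the \emph{retention} step, while the candidate set (integers of the form $p_1\cdots p_k$ with all $p_i$ in a fixed narrow prime window $\mathcal P(N)$) is deterministic. As you yourself note, the retention randomness cannot repair an interval with no candidates, so the whole argument hinges on the uniform lower bound $C_J\gg\log N$ (indeed even $C_J>0$) for \emph{every} interval $J$ of length $H=\exp\bigl(\sqrt{2\log N\log\log N+\cdots}\bigr)$. That bound is not a technical detail to be filled in later: it is a short-interval equidistribution statement for $k$-almost primes with all prime factors confined to a prescribed narrow range, and nothing of this strength is known — it is at least as hard as the unsolved problems about primes (or almost-primes) in short intervals that the theorem is designed to circumvent. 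The heuristic convolution count you write down only controls the \emph{average} of $C_J$ over $J$; passing from the average to every $J$ is exactly the missing idea, and "further randomization of the supports $\mathcal P(N_j)$" does not obviously supply it, since the supports must be fixed before one can speak of a single primitive set. A secondary obstruction: the windows $\mathcal P(N_j)$ for consecutive dyadic blocks sit near $N_j^{1/k_j}=\exp\bigl(\bigl(1+o(1)\bigr)\sqrt{\log N_j\log\log N_j}\bigr)$, which varies very slowly in $j$, so forcing them to be pairwise disjoint across the dyadic family splits essentially the same prime pool among many blocks; the resulting thinning enters $C_J$ to the $k$-th power and threatens the exponent you are trying to achieve.

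The paper takes a structurally different route that avoids any such equidistribution input. The randomness is placed in the multiplicative rule itself: each prime $p_i$ gets an independent random variable $X_i$ with a heavy-tailed law, and $Q=\{n\ge 2: p_i\mid n\Rightarrow \Omega(n)=X_i\}$. Primitivity then holds for \emph{every} realization (if $a\mid b$ with $a<b$ then $\Omega(a)<\Omega(b)$, yet any prime of $a$ divides $b$). The only deterministic input about short intervals is Theorem \ref{thm:coprimeset}: by the linear sieve (Lemma \ref{lem:sieve}) plus Tur\'an's graph theorem, every interval $[x-y,x]$ contains $\gg\sqrt{y}/\log y$ pairwise coprime integers free of prime factors below $\sqrt{y'}$, hence with only about $2\log x/\log y$ prime factors. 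Coprimality makes the inclusion events independent, the inclusion probability for such an $n$ is not too small, and a first-moment computation with $y$ as in \eqref{primitivey} shows the expected number of missed intervals is less than one. If you want to salvage your scheme, the lesson to absorb is that the candidate set must be guaranteed plentiful in every short interval by a sieve-type statement (which is available), rather than by an equidistribution statement for rigid almost-primes (which is not).
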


The proof utilizes the probabilistic method, and so it is not constructive.  It generalizes, however, to the related problem of geometric-progression-free sets, where the analogous problem has recently attracted attention.  

If $r>1$ is rational (sometimes we insist it be integral), then a geometric progression of length $k$ with ratio $r$ is a progression of integers $(g_1,g_2,\ldots g_k$) in which $g_i=rg_{i-1}$.  We say $S$ avoids geometric progressions of length $k$ if it is not possible to find $k$ integers from $S$ in a geometric progression.  Note that primitive sets can be described as sets avoiding geometric progressions of length 2 in which we insist that the ratio $r$ must be an integer.  For the remainder of the paper we will assume that our geometric progressions have length at least 3, and, unless otherwise stated, are allowed to have rational ratio. 

In the case of geometric-progression-free sets, unlike primitive sets, there exist such sets with positive density.  In particular, the squarefree numbers avoid geometric progressions and have density $\frac{6}{\pi^2}$, though this density isn't best possible.  (See \cites{mcnewgpf,NO,Rankin} for results on the maximum density of such a set.)  

Because of this it is not clear, a priori, that there cannot exist such sets in which all of the gaps are bounded above by a fixed constant.  In ergodic theory a set in which every gap is bounded by a constant is known as a \textit{syndetic} set.  Bieglb\"{o}ck, Bergelsen, Hindman and Strauss \cite{BBHS} first posed the question of whether there exists a syndetic set that is geometric-progression-free.  This problem has become well-known as a good example of the difficulty inherent in studying problems that mix the additive and multiplicative structure of the integers, and remains open.  

There has been partial progress toward this question for 2-syndetic sets (sets in which the difference between any two consecutive terms is at most two).  He \cite{He} shows by a computer search that any subset of the range [1,640] containing at least one of any pair of consecutive numbers must contain three term geometric progressions.  Recently Patil \cite{Patil} shows that any sequence of integers $s_1<s_2<\cdots$ with $s_n-s_{n-1} \leq 2$ must contain infinitely pairs $\{a,ar^2\}$ with $r$ an integer. 

In general, one can avoid geometric progressions of length $k{+}1$ by taking the sequence of $k$-free numbers.  Denoting by $s_1<s_2<\cdots$ the sequence of $k$-free numbers, the best known bound on the gaps, due to Trifonov \cite{trifonov} is that \[s_n-s_{n-1} \ll s_n^{\frac{1}{2k+1}} \log s_n.\]  Though this, again, is likely far greater than the truth.  

He \cite{He} considers the existence of geometric-progression-free sets with gaps provably smaller than the bounds for $k$-free numbers.  He shows the following.

\begin{theorem}[He]
For each $\epsilon>0$ there exists a sequence $b_1<b_2<\cdots$ avoiding 6-term geometric progressions satisfying \[b_n-b_{n-1} \ll_\epsilon \exp\left( \left(\frac{5\log 2}6 +\epsilon \right)  \frac{\log b_n}{\log \log  b_n}\right).\]
Furthermore, there exists a sequence $c_1<c_2<\cdots$ avoiding 5-term geometric progressions satisfying \[c_n-c_{n-1} \ll_\epsilon c_n^\epsilon\] 
and a sequence $d_1<d_2<\cdots$ that avoids 3-term geometric progressions with integral ratio in which \[d_n-d_{n-1} \ll_\epsilon d_n^\epsilon.\] 
\end{theorem}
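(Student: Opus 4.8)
The plan is to translate the problem of avoiding $k$-term geometric progressions into one of avoiding $k$-term arithmetic progressions among exponents. Fix $A\subseteq\Z_{\ge 0}$ containing no $k$-term arithmetic progression of nonzero common difference, and set $S_A=\{n\ge 1:v_p(n)\in A\text{ for every prime }p\}$. First I would check that $S_A$ contains no $k$-term geometric progression: if $g,gr,\dots,gr^{k-1}$ were one, with $r=u/w$ in lowest terms and $r\ne 1$, choose a prime $p$ for which $e:=v_p(u)-v_p(w)\ne 0$; then $v_p(g),v_p(g)+e,\dots,v_p(g)+(k-1)e$ is a genuine $k$-term arithmetic progression lying entirely in $A$ (each term is the $p$-adic valuation of a member of $S_A$), a contradiction. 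When the ratio is forced to be an integer the same reasoning applies to the single statistic $\Omega$: the set $S_A'=\{n:\Omega(n)\in A\}$ has no $k$-term geometric progression with $r\in\Z_{\ge 2}$, because $\Omega(gr^j)=\Omega(g)+j\Omega(r)$ and $\Omega(r)\ge 1$. This already supplies the three assertions for \emph{some} set; the work lies in controlling the gaps.

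Everything now reduces to choosing $A$ so that $S_A$ (or $S_A'$) is as dense as possible, since a gap of $S_A$ just past an element is exactly a maximal run of consecutive ``bad'' integers $n$, those with $v_p(n)\notin A$ for some $p$. A $k$-term progression of common difference $1$ is just $k$ consecutive integers, so the longest initial segment $A$ may contain is $\{0,1,\dots,k-2\}$; with $A=\{0,\dots,k-2\}$ the set $S_A$ is precisely the $(k-1)$-free numbers, recovering Trifonov's bound $x^{1/(2k-1)}\log x$. To beat this I would enlarge $A$ to a much denser $k$-AP-free set still containing $\{0,\dots,k-2\}$ — a Behrend-type set, or the integers all of whose base-$B$ digits are at most $k-2$, with $B$ large enough that no $k$-term progression can dodge a forbidden digit. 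Since every $(k-1)$-free number still lies in $S_A$, enlarging $A$ can only shrink gaps; a bad integer must now be divisible by $p^{k-1}$ for some prime $p$, together with the sparser constraints coming from the higher gaps of $A$.

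The crux is then an upper bound for the length $L$ of a run of bad integers in $[x,2x]$. Each of the $L$ integers $n+i$ is exactly divisible by some $p_i^{a_i}$ with $a_i\notin A$, so a fixed prime power $p^a$ accounts for at most $L/p^a+1$ of them, and only finitely many pairs $(p,a)$ with $p^a\le 2x$ and $a\notin A$ occur. A crude count gives only a fixed power of $x$; to reach a subpolynomial bound one feeds this into Trifonov's method, which bounds via divided differences the number of integers in a short interval divisible by a perfect $(k-1)$-st (or higher) power, now summed against the sparse forbidden-exponent set and over primes $p\le x^{1/(k-1)}$. Optimizing the thresholds at which the higher forbidden exponents are allowed to begin, against the dominant contribution of the prime $2$ at exponent $k-1$, is what produces the constant $\tfrac{5\log 2}{6}$ when $k=6$. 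For the $5$-term and integral-ratio $3$-term statements only the weaker bound $O_\epsilon(x^\epsilon)$ is needed, which the same analysis yields with a sparser choice of $A$ (and, in the integral-ratio case, one may instead use the $\Omega$-construction together with a sieve estimate producing, in every interval of length $x^\epsilon$, an integer whose prime exponents all lie in $A$).

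The main obstacle is the run-length estimate and, inside it, the bookkeeping that turns Trifonov's short-interval bound into the explicit constant: one must carry infinitely many congruences $p^a\mid n+i$ simultaneously, discard those coming from large primes or large exponents without losing constants, and verify that passing from ``bad'' to ``non-$(k-1)$-free'' costs nothing. A subsidiary point is checking that the enlarged $A$ really is $k$-AP-free — the base $B$ in the digit construction must be large enough to defeat progressions of large common difference, and one must confirm that the new elements of $A$ form no $k$-term progression together with the block $\{0,\dots,k-2\}$.
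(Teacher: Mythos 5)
This statement is He's theorem, which the paper only quotes from \cite{He} without proof, so there is no internal argument to compare against; judging your proposal on its own terms, it has a genuine gap. The Rankin-type reduction you begin with is correct: if $A$ contains no $k$-term arithmetic progression then $S_A=\{n: v_p(n)\in A \text{ for all } p\}$ contains no $k$-term geometric progression with rational ratio, and $\{n:\Omega(n)\in A\}$ handles integral ratios. But the entire content of the theorem is the quantitative gap bounds, and these you do not establish: the run-length estimate you yourself call ``the crux'' is precisely the missing proof, and the claims that feeding it into Trifonov's method and ``optimizing thresholds'' yields the constant $\tfrac{5\log 2}{6}$ for $k=6$, or $c_n^\epsilon$ and $d_n^\epsilon$ in the other two cases, are asserted rather than proved.

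Worse, the route as described cannot reach bounds of the stated strength. Since $A$ is $k$-AP-free, $A\cap\{0,1,\dots,k-1\}$ is a proper subset, so some exponent $t\le k-1$ is forbidden; hence every element of $S_A$ must, in particular, have no prime occurring to the exact power $t$ (with $t\le 4$ for the $5$-term case, $t\le 5$ for the $6$-term case), and this remains true even if you let the exponent set depend on the prime, since each $A_p$ must separately be $k$-AP-free. Showing that every interval of length $h$ contains such an integer is exactly a $k$-free-type short-interval problem: the trivial union bound collapses once $h\le x^{1/t}$ because the $\asymp x^{1/t}/\log x$ primes $p$ with $p^{t}\le 2x$ each contribute a ``$+1$'' term, and the refined divided-difference machinery of Trifonov that you invoke is precisely what produces the polynomial bounds $x^{1/(2k+1)}\log x$ and nothing smaller. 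So enlarging $A$ to a Behrend- or digit-type set cannot, by these tools, push the gaps below a fixed power of $x$, whereas the theorem claims gaps $x^{\epsilon}$ and even $\exp\bigl((\tfrac{5\log 2}{6}+\epsilon)\log x/\log\log x\bigr)$; note that the latter quantity is roughly $2^{\frac56\pi(\log x)}$, which already signals that He's constructions exploit products of many primes up to about $\log x$ rather than a pure exponent-pattern set. Similarly, for the integral-ratio case your fallback via $\Omega(n)\in A$ would require a short-interval theorem guaranteeing, in every interval of length $x^\epsilon$, an integer whose number of prime factors lies in a prescribed $3$-AP-free set, which you neither prove nor cite. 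The subsidiary issue you flag (verifying that the enlarged digit set is really $k$-AP-free) is minor by comparison; the quantitative core of the theorem is simply not there.
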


The technique developed here allows us to treat 3-term geometric progressions with rational ratio and obtain a substantially smaller bound on the size of the gaps.  In particular we prove the following in section \ref{sec:gpf} .  

\begin{theorem} \label{thm:gpf}
For any $\epsilon>0$ there exists a sequence of integers $t_1< t_2< \cdots$ free of 3-term-geometric-progressions, such that \begin{equation} t_n-t_{n-1} \leq \exp \left(2\sqrt{\log 2\log t_n + \tfrac{3+\epsilon}{2}\sqrt{\log 2\log t_n}\log \log  t_n}\right). \label{gp bound} \end{equation}
\end{theorem}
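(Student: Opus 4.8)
The plan is to combine one structural observation about $3$-term geometric progressions with the probabilistic method, in complete parallel with the argument behind Theorem~\ref{thm:primitive}.

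\emph{The structural reduction.} Suppose $g,gr,gr^2$ is a $3$-term geometric progression of integers with rational ratio $r=u/v>1$, $\gcd(u,v)=1$. Then $v^2\mid g$, say $g=v^2w$, and the progression is $(v^2w,\,uvw,\,u^2w)$. For every prime $p$ the exponent of $p$ in $v^2w$ and in $u^2w$ has the same parity as its exponent in $w$, so the squarefree parts satisfy $\kappa(g)=\kappa(w)=\kappa(gr^2)$, where $\kappa(n)$ denotes the squarefree part of $n$. Hence the first and last terms of any such progression lie in the same \emph{squareclass} $\{\kappa m^2:m\ge 1\}$. Consequently, \textbf{any set of integers containing at most one member of each squareclass avoids $3$-term geometric progressions}, no matter which member of each squareclass is chosen. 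So it suffices to pick, for each squarefree $\kappa$ in a suitable family, one representative $\kappa\,m(\kappa)^2$, and to arrange that the resulting set $T$ has all gaps eventually at most the quantity on the right of \eqref{gp bound}; the progression-freeness is then automatic and never has to be revisited.

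\emph{The random construction and Borel--Cantelli.} Fix $\epsilon>0$. Process the integers in blocks tending to infinity; with the block at height $x$ one associates a family $\mathcal K_x$ of squarefree numbers (the squareclasses to be ``used'' there, possibly subject to an auxiliary smoothness restriction), chosen so that the families over all blocks are pairwise disjoint and so that each $\kappa\in\mathcal K_x$ has at least one member of its squareclass inside the block. For each $\kappa\in\mathcal K_x$ choose $m(\kappa)$ uniformly at random among those $m$ with $\kappa m^2$ in the block, all choices independent, and let $T$ be the set of representatives so produced. By the reduction $T$ is $3$-GP-free with probability $1$, so the theorem reduces to a covering statement. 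Fixing a block at height $x$ and a subinterval $I$ of it of length $H=H(x)$ equal to the right side of \eqref{gp bound}, independence gives
\[
\Pr[\,T\cap I=\varnothing\,]=\prod_{\kappa\in\mathcal K_x}\bigl(1-\Pr[\kappa m(\kappa)^2\in I]\bigr)\le\exp\!\Bigl(-\sum_{\kappa\in\mathcal K_x}\Pr[\kappa m(\kappa)^2\in I]\Bigr),
\]
and for each $\kappa$ the probability is (the number of members of $\kappa$'s squareclass in $I$) over (the number in the block). Estimating these counts by the standard asymptotics for squarefree (resp.\ smooth) integers in the pertinent short intervals --- legitimate because the blocks are taken long enough for the relevant sieve bound to be effective --- one checks that for $H$ as in \eqref{gp bound} the exponent exceeds $(1+\epsilon)\log x$, so $\Pr[T\cap I=\varnothing]\le x^{-1-\epsilon}$. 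Summing over the $O(x/H)\le x$ candidate subintervals in each block and over all blocks yields a convergent series, so almost surely only finitely many intervals of the prescribed length fail to meet $T$; discarding finitely many points and reindexing produces $t_1<t_2<\cdots$ satisfying \eqref{gp bound}.

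\emph{Where the difficulty lies.} The whole content is in calibrating the family $\mathcal K_x$ --- how far below $x$ the squarefree parts $\kappa$ are allowed to run, together with the auxiliary smoothness cut-off --- so that the displayed exponent genuinely beats $\log x$ with $H$ as small as possible. The competition is delicate: a squareclass whose kernel $\kappa$ is close to $x$ meets a given dyadic range in only a handful of widely spaced points, hence helps very few subintervals; pushing $\kappa$ much smaller both collapses the set of legal multipliers $m$ (destroying the independence the argument exploits) and forces the block length below the range in which the short-interval counts are reliable; and the required disjointness of the $\mathcal K_x$ couples the interval lengths at consecutive scales. Carrying out this optimisation is precisely what turns ``exponent $>\log x$'' into the expression $2\sqrt{\log 2\log t_n+\tfrac{3+\epsilon}{2}\sqrt{\log 2\log t_n}\,\log\log t_n}$, just as optimising over the admissible number of prime factors of the members produced the exponent in \eqref{primitive bound}; I expect this balancing step to be the main obstacle.
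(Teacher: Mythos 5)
Your structural reduction is correct and nicely stated: writing $r=u/v$ in lowest terms forces $v^2\mid g$, and indeed $g$ and $gr^2$ have the same squarefree kernel, so any set meeting each squareclass $\{\kappa m^2\}$ at most once is automatically free of $3$-term rational-ratio progressions. The fatal problem is in the covering step. In your model the probability that a given $n=\kappa m^2\in I$ is selected is $1/N_\kappa$, where $N_\kappa$ is the number of members of its squareclass in the block; for a block of length comparable to $x$ this is roughly $\min\bigl(1,\sqrt{\kappa(n)/x}\bigr)$, which is tiny precisely when $n$ has a large square part (and is zero if $\kappa(n)$ is excluded by your smoothness restriction). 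So to make the exponent $\sum_{\kappa}\Pr[\kappa m(\kappa)^2\in I]$ exceed $(1+\epsilon)\log x$ for \emph{every} subinterval $I$ of length $H=\exp\bigl(2\sqrt{\log 2\log x(1+o(1))}\bigr)=x^{o(1)}$, you must show that every such interval contains many integers whose square part is small (say at most $(H/\log x)^2$). No ``standard asymptotics for squarefree (resp.\ smooth) integers in short intervals'' delivers this: the intervals $I$, not the blocks, are the relevant ranges, and for intervals of length $x^{o(1)}$ nothing of the sort is known --- even the existence of a single squarefree integer is only known in intervals of length about $x^{1/5}\log x$ (Filaseta--Trifonov type results), and one cannot rule out that all $H$ integers of some interval are divisible by squares exceeding $H$, since each large square can cover a point of $I$ and the lattice-point-near-curves machinery fails completely at this scale. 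Thus the sentence ``one checks that the exponent exceeds $(1+\epsilon)\log x$'' hides an open problem, and the claimed final exponent is asserted rather than derived from your scheme (on \emph{average} your scheme would even suggest gaps $O(\log x)$, which is exactly the sign that the per-interval input, not the optimisation, is the real obstruction).

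The paper circumvents this by a different random model and a provable short-interval input. It attaches to each prime $p_i$ an independent random exponent $X_i$ with $P(X_i=n)=2^{-n}$ and takes $T=\{n: p_i\mid n\Rightarrow p_i^{X_i}\,\|\,n\}$, so that a fixed $n$ lies in $T$ with probability $2^{-\Omega(n)}$, which is non-negligible as soon as $\Omega(n)$ is small; and its Theorem \ref{thm:coprimeset} (linear sieve plus Tur\'an's graph theorem) guarantees that \emph{every} interval of length $y$ contains at least $c_3\sqrt{y}/\log y$ pairwise coprime integers, each with at most $\tfrac{2\log x}{\log y}+O\bigl(\tfrac{\log x}{\log^2 y}\bigr)$ prime factors; pairwise coprimality gives independence, and balancing $2^{-2\log x/\log y}$ against $\sqrt{y}/\log y$ is what produces the exponent in \eqref{gp bound}. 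Your proposal has no analogue of this guaranteed structure in every short interval, and without one the Borel--Cantelli step cannot be launched. If you want to salvage your squareclass framework, you would still need a lemma of this type, at which point you are essentially back to the paper's argument.
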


\section{Coprime subsets of intervals}
We first prove that in any short interval we can find a relatively large subset of integers that are pairwise coprime. Using the linear sieve of Rosser and Iwaniec (see for example Theorem 12.14 and Corollary 12.15 of \cite{cribro}) one can sieve an interval of length $y$ by primes up to  nearly $\sqrt{y}$.  The result can be stated as follows.
\begin{lemma} \label{lem:sieve}
There exist positive constants $c_1$ and $c_2$ so that every interval of length $c_1y$ with $y\geq 2$ contains at least $\frac{y}{ \log^2 y}$ integers free of prime factors smaller than $\sqrt{y}$, and at most $\frac{c_2 y}{\log y}$ such integers.
\end{lemma}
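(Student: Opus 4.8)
The plan is to apply the linear (Rosser--Iwaniec, or $\beta$-) sieve of Theorem~12.14 and Corollary~12.15 of \cite{cribro} to the set $\mathcal{A}$ of integers in the given interval $I$ of length $c_1 y$, sifting out those divisible by a prime $p<z:=\sqrt y$; write $S(\mathcal{A},z)$ for the resulting count, which is exactly the number of integers of $I$ with no prime factor below $\sqrt y$. For squarefree $d\mid\prod_{p<z}p$ we have $\#\{n\in\mathcal{A}:d\mid n\}=c_1y/d+r_d$ with $|r_d|\le 1$, so $\sum_{d<D}|r_d|\le D$ for any choice of level of distribution $D$, while the sieve's main term is $c_1y\,V(z)$ with $V(z)=\prod_{p<z}(1-1/p)\sim 2e^{-\gamma}/\log y$ by Mertens' theorem. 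Hence any $D\le c_1y/\log^3 y$ keeps the remainder negligible against the main term, and one then reads off both inequalities from the sieve bounds with $s:=\log D/\log z$.

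For the upper bound take $D=c_1y/\log^3 y$, so $s\to 2$; the linear sieve gives $S(\mathcal{A},z)\le c_1y\,V(z)\bigl(F(s)+o(1)\bigr)+D$, and since $F$ stays bounded (indeed $F(s)\to F(2)=e^{\gamma}$) this is at most $\tfrac{c_2 y}{\log y}$ for a suitable absolute constant $c_2$ once $y$ is large, the finitely many small $y$ being absorbed by enlarging $c_1$ and $c_2$. This half is routine.

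The lower bound is the delicate point. Here $z=\sqrt y$ is essentially the square root of the length of $I$, which forces $s\le 2$, and the linear sieve's lower function vanishes there: $f(s)=0$ for $s\le 2$. This is the parity obstruction, and no admissible level $D\le c_1 y$ escapes it, so the raw sieve only yields the trivial bound. I would circumvent it with Buchstab's identity: writing $\mathcal{A}_p=\{m:pm\in I\}$, an interval of length $\asymp c_1y/p$,
\[
S(\mathcal{A},\sqrt y)=S(\mathcal{A},y^{1/4})-\sum_{y^{1/4}\le p<\sqrt y}S(\mathcal{A}_p,p),
\]
I would bound $S(\mathcal{A},y^{1/4})$ from below by the linear sieve (now $s\to 4$ and $f(4)=\tfrac{e^{\gamma}}{2}\log 3>0$, giving $\gg c_1y/\log y$) and each $S(\mathcal{A}_p,p)$ from above by the linear sieve, whose upper bound holds uniformly in the (arbitrary, possibly enormous) location of $\mathcal{A}_p$. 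One pass of this is exactly borderline --- both the main term and the subtracted sum evaluate to $\tfrac{2\log 3}{\log y}c_1y\,(1+o(1))$ --- so I would iterate Buchstab's identity once or twice more on the terms $S(\mathcal{A}_p,p)$, arranging that the degenerate pieces (where the sifting level sits near the square root of the length of the sub-interval and so cannot be bounded below) appear only where an \emph{upper} bound is needed, while the recovered inner sums leave a genuine positive surplus. This yields $S(\mathcal{A},\sqrt y)\gg c_1 y/\log y$, more than the asserted $y/\log^2 y$; and for small $y$, where $\sqrt y$ is bounded and admissible integers have a fixed positive density, the bound is immediate once $c_1$ is large. The main obstacle is precisely this parity barrier, together with the bookkeeping needed to run the Buchstab iteration uniformly in the position of $I$: tracking the error terms in the linear-sieve estimates and in Mertens' theorem through every step, and replacing the resulting sums over primes by integrals in the splitting exponents. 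Alternatively, one may simply quote the fact --- standard precisely via this $\beta$-sieve-plus-Buchstab machinery, and recorded in \cite{cribro} --- that every interval $I$ contains $\asymp |I|/\log z$ integers with no prime factor below $z$, uniformly for $2\le z\le |I|^{1/2}$.
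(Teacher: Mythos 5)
Your upper-bound half is routine and correct, and your diagnosis of the difficulty is exactly the right one: with an interval of length $c_1y$ and sifting level $z=\sqrt y$, any level of distribution $D$ whose remainder you control by absolute values forces $s=\log D/\log z\le 2$, where $f(s)=0$. The genuine gap is in the proposed escape. Iterating Buchstab's identity and ``arranging that the degenerate pieces appear only where an upper bound is needed'' cannot succeed if each piece is again estimated only by the linear-sieve functions $F$ and $f$ with remainders counted in absolute value (and for sub-intervals $\mathcal{A}_p$ at arbitrary, enormous position that is all you have): the pair $(F,f)$ is precisely the fixed point of the Buchstab recursion, which is why your one-pass computation balances exactly at $\frac{2\log 3}{\log y}c_1y$, and any further decomposition reproduces that exact balance to leading order, leaving only lower-order terms that are swallowed by the $O((\log D)^{-1/3})$ sieve errors and the remainder sums. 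This is a provable obstruction, not bookkeeping: Selberg-type parity sequences satisfy every input you use (main terms $X/d$ with individually bounded remainders up to a level equal to the length), and at $z$ a constant factor below the square root of the length they retain only $\asymp X\log c_1/\log^{2}X$ sifted elements; consequently no argument built solely from those inputs can deliver your claimed $\gg c_1y/\log y$, and even the weaker $y/\log^2 y$ of the lemma is out of reach of the absolute-value treatment, since any $D>z^2=y$ already makes the trivial remainder bound $\sum_{d<D}|r_d|\le D$ exceed the main term $\asymp y/\log y$. The closing ``standard fact'' you invoke ($\asymp|I|/\log z$ uniformly up to $z=|I|^{1/2}$ by ``$\beta$-sieve plus Buchstab'') overstates both the mechanism and the strength; note the lemma itself only claims $y/\log^2y$.

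What is actually needed at the square-root boundary is structure of intervals beyond those axioms: cancellation in the sieve remainder $\sum_d\lambda_d r_d$, where $r_d$ is a difference of sawtooth values, exploited through the bilinear (well-factorable) form of the linear-sieve error term — this is Iwaniec's treatment of Jacobsthal's problem (every interval of length $\gg z^2$ contains integers free of prime factors below $z$), and it is what the statements quoted from \cite{cribro} encapsulate. For comparison, the paper does not reprove the lemma at all: it records it as a consequence of the cited Theorem 12.14 and Corollary 12.15. So your fallback option coincides with the paper's actual route, but as a self-contained argument your lower bound has a real gap at the only difficult point, and the honest write-up would either quote the reference (checking its hypotheses at $z$ a constant factor below the square root of the interval length) or reproduce the bilinear-remainder argument, not a Buchstab iteration.
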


Using this we can show that the short interval $[x-y,x]$ contains a reasonably large subset of pairwise coprime integers.  Erd\H{o}s and Selfridge \cite{erdself} (see also \cite{erdric}) prove that for sufficiently large $y$ and any $\epsilon>0$  any such interval has a pairwise coprime subset of size at least $y^{1/2-\epsilon}$, though their proof is not correct as written.  We correct and refine the argument, using Lemma \ref{lem:sieve} to show the following.   

\begin{theorem} \label{thm:coprimeset}
For sufficiently large $y$ and $x\geq y+1$, any interval $[x-y,x]$ contains a subset of pairwise coprime integers of size at least $ \frac{c_3 \sqrt{y}}{\log y}$ for some positive constant $c_3$.  
\end{theorem}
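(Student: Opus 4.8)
The plan is to combine the sieve estimate of Lemma~\ref{lem:sieve} with a greedy selection argument organized by the size of prime factors. Set $z = \sqrt{y}$. Every integer in $[x-y,x]$ with no prime factor below $z$ is either $1$, a prime, or a product of two primes each lying in $[z, y]$ (since a product of three such primes would exceed $y^{3/2} > x$ for $x$ not too much larger than $y$ — one has to be slightly careful here, but for $x \ge y+1$ and $y$ large the interval has length $y$, so any element is at most $x \le$ roughly $y \cdot (\text{something})$; in fact one restricts attention to $[x-y,x]$ with the understanding that elements are $O(x)$, and chooses the sieving level relative to $y$, so that $z$-rough elements of an interval of length $c_1 y$ have at most two prime factors once $y$ is large compared to the relevant ratios — this is the standard fact behind the linear sieve reaching $\sqrt{\cdot}$). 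By Lemma~\ref{lem:sieve}, after breaking $[x-y,x]$ into $O(1)$ subintervals of length $c_1 y$, the whole interval contains at least $c\, y/\log^2 y$ such $z$-rough integers, call this set $A$.

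Next I would extract from $A$ a pairwise coprime subset. Two $z$-rough integers in an interval of length $y$ share a common factor only if they share a prime factor exceeding $z = \sqrt y$, and since each element of $A$ has at most two prime factors, I build a graph $G$ on $A$ joining $m,m'$ when $\gcd(m,m')>1$, i.e. when they share a prime $p > \sqrt y$. For a fixed prime $p$, the elements of $A$ divisible by $p$ number at most $y/p + 1 \le \sqrt y + 1$, since they lie in an interval of length $y$; more importantly, among elements of $A$ divisible by a given $p>\sqrt y$, each is $p$ times an integer in an interval of length $y/p < \sqrt y$, and since the cofactor is either $1$ or a prime $>\sqrt y$ but the cofactor interval has length $<\sqrt y$, there is at most one such element, namely $p$ itself if it lies in the interval, plus possibly a few of the form $p q$. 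A cleaner way: an independent set in $G$ is exactly a pairwise coprime subset, so I bound the number of edges and apply Turán's theorem (or greedy deletion). I would show the total number of edges is $O(y/\log^2 y \cdot 1)$ — each "large prime" $p$ contributes a clique of bounded size among the $p$-multiples in $A$ — so $G$ has bounded average degree and hence an independent set of size $\gg |A|/(\text{avg degree}) \gg \sqrt y/\log y$ after accounting for the $\sqrt y$-scale. Concretely, pair off each $z$-rough semiprime $pq$ with its smaller prime factor; the number of distinct primes appearing is $\gg |A|$, and selecting one integer per prime (the rough integer whose least prime factor is that prime, when well-defined) yields a pairwise-coprime family, whose size one checks is $\gg \sqrt y/\log y$ using that primes in $[z,y]$ number $\asymp \sqrt y/\log y$ by the prime number theorem and each "absorbs" only $O(1)$ elements of $A$.

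The main obstacle is the bookkeeping in the last step: showing that the $\gg y/\log^2 y$ rough integers are spread across $\gg \sqrt y/\log y$ distinct large primes with bounded multiplicity, so that a coprime transversal of the promised size exists. This is exactly where the Erd\H{o}s--Selfridge argument needs correcting: one must rule out the degenerate possibility that most rough integers are squarefree semiprimes sharing factors in a way that collapses the transversal, and one must handle the genuine primes in the interval (which are automatically pairwise coprime and already number $\gg y/\log y$ if the interval is long, but here we only have length $y$, giving $\asymp \sqrt y$ honest primes is not guaranteed — so the semiprimes are essential). I would finish by taking the larger of the two contributions (primes $p \in [z,y]$ dividing some element of $A$, versus a maximal coprime subfamily of the semiprimes) and absorbing all constants into $c_3$.
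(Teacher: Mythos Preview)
Your outline is in the right spirit---sieve to get a set $S$ of $\sqrt{y}$-rough integers, build the ``shared prime factor'' graph on $S$, and extract a large independent set---and this is exactly the skeleton of the paper's argument. But two genuine problems keep your sketch from going through.

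First, the claim that $z$-rough elements of $[x-y,x]$ have at most two prime factors is simply false under the hypotheses: the theorem is stated for all $x\ge y+1$, so $x$ may be $y^{100}$, and a $\sqrt{y}$-rough integer near $x$ can then have on the order of $200$ prime factors. Everything you build on this (the ``cofactor is $1$ or a prime'' remark, the bounded-clique idea, the transversal by least prime factor) collapses once $x$ is allowed to be large. You cannot repair this by restricting to $x$ close to $y$, because later applications in the paper use $x$ much larger than $y$.

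Second, and this is the missing idea, the trivial bound of at most $y/p+1$ multiples of $p$ in $S$ is not sharp enough. Summing $\binom{y/p+1}{2}$ over primes $p>\sqrt{y}$ gives an edge count of order $y^{3/2}/\log y$, and Tur\'an then yields only an independent set of size $\gg \sqrt{y}/\log^3 y$, short of the stated $\sqrt{y}/\log y$. The fix, which is precisely the correction to Erd\H{o}s--Selfridge the paper supplies, is to observe that if $p\mid n$ for some $n\in S$ then $n/p$ lies in an interval of length $y/p$ and is itself $\sqrt{y'}$-rough; now apply the \emph{upper} bound in Lemma~\ref{lem:sieve} to that shorter interval (sieving only up to $\sqrt{y'/p}$, which is legitimate) to get at most $c_2\,(y'/p)/\log(y'/p)$ such $n$. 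This extra logarithm in the denominator brings the edge count down to $O\!\left(y'^{3/2}/\log^3 y'\right)$, after which Tur\'an's bound $v^2/(v+2e)$ with $v\gg y'/\log^2 y'$ delivers $\gg \sqrt{y}/\log y$ as required. No structural information about the number of prime factors of elements of $S$ is needed for this step, which is why the argument works uniformly in $x$.
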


\begin{proof}
Let $y'=y/c_1$, where $c_1$ is the constant from Lemma \ref{lem:sieve}.  That lemma then implies that the set $S \subset [x-y,x]$ consisting of integers in this interval free of prime factors smaller than $\sqrt{y'}$ contains at least $\frac{y'}{ \log^2 y'}$ integers.  

Now, let $p \geq \sqrt{y'}$ be prime, and suppose $p|n$ for some $n \in S$.  Then $n=pm$ where $m \in \left[\frac{x}{p} - \frac{y}{p},\frac{x}{p}\right]$ (an interval of length $\frac{c_1y'}{p}$).  Since $n$ is free of prime factors smaller than $\sqrt{y'}$, $m$ will be free of such primes as well.  While we can't sieve this shorter interval of primes as large as $\sqrt{y'}$, we can use Lemma \ref{lem:sieve} to sieve this interval of primes up to $\sqrt{\frac{y'}{p}}$, at least so long as $\frac{y'}{p}$ is at least two.  Thus for each prime $\sqrt{y'}\leq p < \frac{y'}{2}$, the number of integers in $S$ divisible by the prime $p$ is at most \[\frac{c_2 y'}{p \log \frac{y'}{p}}.\]
For those primes $\frac{y'}{2}\leq p<y$, we can bound the number of integers in $S$ divisible by $p$ trivially by $\left \lceil \frac{y}{p}\right \rceil =O(1)$.  

We now use Turan's graph theorem to prove that a large subset of $S$ is pairwise coprime.  Construct a graph in which the vertices are the elements of $S$ and the edges connect vertices corresponding to integers which share a prime factor.  Adding together the total number of edges produced by each prime, we find that the total number of edges in the graph is at most \begin{align*} \frac{1}{2}\sum_{\sqrt{y'}\leq p < \frac{y'}{2}} \left(\frac{c_2 y'}{p \log \frac{y'}{ p}}\times \left(\frac{ c_2 y'}{p \log \frac{y'}{ p}}-1\right) \right) &+\frac{1}{2}\sum_{\frac{y'}{2}\leq p <y} \left \lceil \frac{y}{p}\right \rceil\left(\left \lceil \frac{y}{p}\right \rceil-1 \right) \\
&\leq \sum_{\sqrt{y'}\leq p < \frac{y'}{2}} \frac{c_2^2y'^2}{p^2 \log^2 \frac{y'}{p}} + \sum_{\frac{y'}{2} \leq p < y } O(1).\end{align*}
By partial summation this expression is at most  $\frac{c' y'^{3/2}}{\log^3 y'}$ for some constant $c'$.

Turan's graph theorem states that any graph with $v$ vertices and $e$ edges has an independent set of vertices of size at least $\frac{v^2}{v+2e}$.  Applying this to our graph we find there must be an independent set of vertices (corresponding to a set of pairwise coprime integers) of size at least \[\frac{\frac{y'^2}{\log^4 y'}}{\frac{y'}{\log^2 y'} + \frac{2c' y'^{3/2}}{\log^3 y'}} \gg \frac{\sqrt{y'}}{\log y'}\] and the result follows.
\end{proof}

\begin{remark} \label{rem:pfactors}
Note that in the construction above, the integers in the set were free of prime factors less than $\sqrt{y'}$, and thus have at most \[\frac{\log x }{\log \sqrt{y'}} = \frac{2 \log x}{\log y -\log c_1} = \frac{2 \log x}{\log y} + O\left(\frac{\log x}{\log^2 y}\right)\] prime factors.
\end{remark}

\section{Primitive sets without large gaps}

Using these results we are now able to give a proof of Theorem \ref{thm:primitive} using the probabilistic method.

\begin{proof}[Proof of Theorem \ref{thm:primitive}] 
We construct a primitive set according to the following probabilistic construction and then show that, with high probability, the set we constructed does not have any gaps greater than the bound \eqref{primitive bound}.  

Fix $\epsilon>0$. For each prime number $p_i$ we choose a corresponding positive-integer-valued random variable $X_i$ with distribution \[ P(X_i = n) = \frac{C_{\epsilon}}{n\log^{1+\tfrac{\epsilon}{8}} (n+2)},\]
with $C_{\epsilon}$ chosen to normalize the distribution. (Note that the sum of these terms converges since the power on the logarithm is greater than 1. The purpose of adding two inside the logarithm is just to make the probability positive when $n$ is either 1 or 2.)  We then construct the set of integers $Q = \{n \geq 2 : p_i|n \rightarrow \Omega(n) = X_i\}$, consisting of only those integers $n$ for which the total number of prime factors dividing $n$ agrees with the random variable $X_i$ corresponding to every single one of its prime divisors, $p_i$.    

It is readily seen that this construction always produces a primitive set, since if $a \in Q$, and $a|b$ with $b>a$, then $\Omega(a) < \Omega(b)$, but any prime dividing $a$ also divides $b$, and so $b$ cannot be in $Q$.

We now show that we expect every interval of size \eqref{primitive bound} to contain an element of this set.  Let \begin{equation} y = \exp \left(\sqrt{2\log x \log \log  x + (2+\epsilon)\log x \log_3 x }\right), \label{primitivey} \end{equation}
and consider the interval $[x-y,x]$.  
Using Theorem \ref{thm:coprimeset}, along with the observation of Remark \ref{rem:pfactors}, there exists a subset $S$ of the integers in this interval containing at least $\frac{c_3 \sqrt{y}}{\log y}$ integers from this interval which are pairwise coprime.  Furthermore, the integers in $S$ have no more than $\frac{2 \log x}{\log y} + O\left(\frac{\log x}{\log^2 y}\right)$ prime factors.  Suppose $n \in S$, then the probability that $n \in Q$ is \begin{align*}
    P&(n \in Q) = \prod_{p_i|n} P(X_i = \Omega(n)) = \prod_{p_i|n} \frac{C_{\epsilon}}{\Omega(n)\log^{1+\tfrac{\epsilon}{8}} (\Omega(n)+2)} \\
    &\geq \left(\frac{C_{\epsilon}}{\left(\frac{2 \log x}{\log y} + O\left(\frac{\log x}{\log^2 y}\right)\right)\log^{1+\tfrac{\epsilon}{8}}\left(\frac{2 \log x}{\log y} +O(1)\right)}\right)^{\frac{2 \log x}{\log y} + O\left(\frac{\log x}{\log^2 y}\right)} \notag \\
    &= \exp\left(\left(-\frac{2 \log x}{\log y} {+} O\left(\frac{\log x}{\log^{2} y}\right)\right){\times} \left(\log\left(\frac{\log x}{\log y}\right){+}\left(1{+}\tfrac{\epsilon}{8}\right)\log_3 x {+}O_\epsilon\left(1 \right)\right)\right)\notag \\
    &= \exp\left(-\frac{2 \log x}{ \log y}\left(\log\left(\frac{\log x}{\log y}\right) +\left(1+\frac{\epsilon}{8}\right) \log_3 x+ O_\epsilon(1)\right)\right). \notag
\end{align*}

Since the elements of $S$ are pairwise coprime, the probability that any one element of $S$ is included in $Q$ is independent of the probability of any other element is included.  Thus the  probability that no integer from the interval $[x~-~y,x]$ is included in $Q$ can be bounded as follows. \begin{align*}
    &P([x-y,y]\cap Q = \varnothing) \leq P(S\cap Q = \varnothing) = \prod_{n \in S} \left(1-P(n \in Q)\right) \\
    & \leq  \prod_{n \in S}\left(1{-} \exp\left(-\frac{2 \log x}{ \log y}\left(\log\left(\frac{\log x}{\log y}\right) +\left(1+\tfrac{\epsilon}{8}\right) \log_3 x+ O_\epsilon(1)\right)\right)\right) \\
    &\leq \left(1- \exp\left(-\frac{2 \log x}{ \log y}\left(\log\left(\frac{\log x}{\log y}\right) +\left(1{+}\tfrac{\epsilon}{8}\right) \log_3 x+ O_\epsilon(1)\right)\right)\right)^{\frac{c_3 \sqrt{y}}{\log y}} \\
    & \leq \exp\left( - \frac{c_3 \sqrt{y}}{\log y} {\times} \exp\left(-\frac{2 \log x}{ \log y}\left(\log\left(\frac{\log x}{\log y}\right) {+}\left(1{+}\tfrac{\epsilon}{8}\right) \log_3 x{+} O_\epsilon(1)\right)\right)\right) \\
    & = \exp\left(\hspace{-.5mm} {-}\exp \left(\tfrac{1}{2}\log y {-}\log \log y {-} \frac{2\log x}{\log y}\left( \log \left(\frac{\log x}{\log y}\right)\hspace{-.5mm}{+}\left(1{+}\tfrac{\epsilon}{8}\right) \log_3 x {+} O_\epsilon\hspace{-.5mm}(1)\hspace{-.2mm}\right)\hspace{-.2mm}\right)\hspace{-.2mm}\right).
\end{align*}

Now, inserting  our choice \eqref{primitivey} for the length $y$ of the interval, the innermost exponent above becomes
\begin{align*}
   \tfrac{1}{2}&\sqrt{2\log x (\log \log  x {+} \left(1{+}\tfrac{\epsilon}{2}\right) \log_3 x)} {-} \frac{2\log x\left(\log \left(\frac{\sqrt{\log x}}{\sqrt{\log \log  x }}\right) \hspace{-0.5mm}{+}\left(1{+}\frac{\epsilon}{8}\right) \log_3 x{+}O_\epsilon(1)\right)}{\sqrt{2\log x (\log \log  x + \left(1+\frac{\epsilon}{2}\right) \log_3 x)}} \\
   &=\sqrt{\tfrac{1}{2}\log x}\left(\sqrt{\log \log  x {+} \left(1{+}\tfrac{\epsilon}{2}\right) \log_3 x}- \frac{\log \log  x {+} \left(1{+}\tfrac{\epsilon}{2}{-}\frac{\epsilon}{4}\right)\log_3 x+O_\epsilon(1)}{\sqrt{\log \log  x + \left(1{+}\frac{\epsilon}{2}\right) \log_3 x}}\right) \\
   &= \frac{\epsilon}{8}\frac{\sqrt{2\log x}}{\sqrt{\log \log  x}}\left(\log_3 x+O_\epsilon(1)\right).
\end{align*}

Therefore the probability that none of the integers from the interval $[x-y,x]$ are included in $Q$, which is less than the probability that no integer in $S$ is included in $Q$ since $S \subset [x-y,x]$, is at most $\exp\left(-\exp\left(\frac{\epsilon}{8}\frac{\sqrt{2\log x}}{\sqrt{\log \log  x}}\left(\log_3 x+O_\epsilon(1)\right)\right)\right)$.  Using linearity of expectation, and by starting the sequence at a sufficiently high initial value $N$, we can ensure that the expected number of intervals of the form $[x-y,x]$ which do not contain an integer in $Q$ is at most 
\begin{align*}
    \sum_{x>N} P(&[x {-} y,x] \cap Q = \varnothing) \leq \sum_{x>N}  \exp\left(-\exp\left(\frac{\epsilon\sqrt{2\log x}}{8\sqrt{\log \log  x}}\left(\log_3 x{+}O_\epsilon(1)\right)\right)\right) <1
\end{align*} 
since this series converges.

Because the expected number of intervals that do not contain an integer in $Q$ is less than 1, there must exist a sequence $Q$ which intersects every such interval, and thus satisfies the properties of the theorem.
\end{proof}

\section{Geometric-Progression-Free sets without large gaps} \label{sec:gpf}

A very similar construction can be used to prove Theorem \ref{thm:gpf}, producing a set free of 3-term geometric progressions with gaps smaller than those obtained by He.  

\begin{proof}[Proof of Theorem \ref{thm:gpf}] 
Following the method of proof of Theorem \ref{thm:primitive}, we construct a set similar to the squarefree numbers, in the sense that each prime number is only allowed to appear (if it appears at all) to one fixed power in any element of the set. As before, we construct this set probabilistically and then bound the probability that it omits any interval of the size given in \eqref{gp bound}.  

For each prime $p_i$ choose a positive-integer-valued random variable $X_i$ with distribution \[ P(X_i = n) = \frac{1}{2^n}.\]
Now construct the set of integers $T = \{n \geq 2 : p_i|n \rightarrow p_i^{X_i}||n\}$ consisting of those integers $n$ where the exponent on each of its prime divisors $p_i$ is equal to the random variable $X_i$.  (If $p_i$ divides $n$ then $p_i^{X_i}$ is the largest power of $p_i$ that divides $n$.)
 
This set $T$ is free of 3-term geometric progressions of integers for essentially the same reason that the squarefree integers avoid such progressions. If $\{a,ar,ar^2\}$ is any geometric progression with $r\in \mathbb{Q}$, $r>1$ and $p$ divides the numerator of $r$ but not the denominator, then $p$ appears to different, positive, powers in $ar$ and $ar^2$, and hence both cannot be in $T$.

We now show that we expect every interval of size \eqref{gp bound} to contain an element of this set.  Let \begin{equation} y = \exp \left(2\sqrt{\log 2\log x +\tfrac{3+\epsilon}{2}\sqrt{\log 2 \log x}\log \log  x }\right) \label{eq:gpfy} \end{equation}
and consider the interval $[x-y,x]$.  
We again use Theorem \ref{thm:coprimeset} to obtain a pairwise coprime subset $S$ of this interval of size at least $\frac{c_3 \sqrt{y}}{\log y}$ consisting of integers having at most $\frac{2 \log x}{\log y} + O\left(\frac{\log x}{\log^2 y}\right)$ prime factors.  The probability an integer $n$ from this set is contained in $T$ is \begin{align*}
    P(n \in T) = \prod_{p_i^\alpha||n} P(X_i = \alpha) &= \prod_{p_i^\alpha||n} \frac{1}{2^\alpha} = \left(\frac{1}{2}\right)^{\Omega(n)} \geq \left(\frac{1}{2}\right)^{\frac{2 \log x}{\log y} + O\left(\frac{\log x}{\log^2 y}\right)} \\
    &= \exp\left(-\left(\frac{2 \log 2 \log x}{\log y} + O\left(\frac{\log x}{\log^2 y}\right)\right) \right).
\end{align*}

Exploiting the fact that elements of $S$ are pairwise coprime, the probability that none of the elements of $S$ are included in $T$ is \begin{align*}
    \prod_{n \in S} \left(1-P(n \in T)\right) & \leq  \prod_{n \in S}\left(1-\exp\left(-\left(\frac{2 \log 2 \log x}{\log y} + O\left(\frac{\log x}{\log^2 y}\right)\right) \right)\right) \\
    &\leq \left(1-\exp\left(-\left(\frac{2 \log 2 \log x}{\log y} + O\left(\frac{\log x}{\log^2 y}\right)\right) \right)\right)^{\frac{c_3 \sqrt{y}}{\log y}} \\
    & \leq \exp\left( - \frac{c_3 \sqrt{y}}{\log y} \times \exp\left(-\left(\frac{2 \log 2 \log x}{\log y} + O\left(\frac{\log x}{\log^2 y}\right)\right) \right)\right) \\
    & = \exp\left( -\exp \left(\tfrac{1}{2}\log y - \frac{2\log 2 \log x}{\log y}  - \log \log  y + O(1) \right)\right) .
\end{align*}

Inserting \eqref{eq:gpfy} here for $y$ the innermost exponent above becomes
\begin{align*}
   &\sqrt{\log 2\log x +\tfrac{3+\epsilon}2\sqrt{\log 2 \log x}\log \log  x } - \frac{ \log 2 \log x  }{\sqrt{ \log 2 \log x +\frac{3+\epsilon}{2}\sqrt{\log 2 \log x}\log \log  x}} \\
   &\hspace{3cm} - \frac{1}{2} \log \log  x +O(1)\\
   & =\sqrt{\log 2 \log x}\left(\sqrt{1 {+}\frac{(3{+}\epsilon)\log \log  x}{2\sqrt{\log 2\log x}}} - \frac{1}{\sqrt{1 {+}\frac{(3+\epsilon)\log \log  x}{2\sqrt{\log 2\log x}}}}\right) {-} \frac{1}{2} \log \log  x +O(1)\\
   &=\sqrt{\log 2 \log x}\left(\frac{(3+\epsilon)\log \log  x}{2\sqrt{\log 2\log x}}+ O\left(\frac{(\log \log  x)^2}{\log x}\right)\right) - \frac{1}{2} \log \log  x +O(1)\\
   &=\left(1+\frac{\epsilon}{2}\right) \log \log  x   +O\left(1\right).
\end{align*}
The fourth line above was obtained using the Taylor expansion \[\sqrt{1+x}-\frac{1}{\sqrt{1+x}} = x +O(x^2)\] around $x=0$.

Therefore, the probability that no such integer is included in $T$ is at most $\exp\left(-\exp\left(\left(1+\frac{\epsilon}{2}\right) \log \log  x   +O\left(1\right)\right)\right)$.  As before, by linearity of expectation, we may choose a sufficiently high initial value $N$ so that the expected number of intervals of the form $[x-y,x]$ which do not contain an integer in $T$ is at most 
\[\sum_{x>N} P([x - y,x] \cap T = \varnothing) \leq \sum_{x>N} \exp\left(-\exp\left(\left(1+\frac{\epsilon}{2}\right) \log \log  x   +O\left(1\right)\right)\right) <1\] 
since this series is convergent.
Thus there exists a geometric-progression-free sequence $T$ satisfying the properties of the theorem.
\end{proof}

\section{Final Remarks}

While we were able to show that there exist primitive sets in which the gap between consecutive terms was much smaller than what is known to be true, even conditionally for the primes, the method developed doesn't seem to generalize to sequences of pairwise coprime integers.  

\begin{question}
Can one prove that there exists a sequence $v_1<v_2<\cdots$ of pairwise coprime integers in which the difference between consecutive terms $v_n{-}v_{n-1}$ is smaller than the best known upper bound for the gaps between primes?
\end{question}

\section*{aknowledgements}
The author is grateful to Angel Kumchev, Greg Martin and Carl Pomerance for helpful discussions during the development of this paper, and to the anonymous referee for useful feedback.

\bibliographystyle{amsplain}
\bibliography{bibliography}

\end{document}